\providecommand{\U}[1]{\protect\rule{.1in}{.1in}}
\providecommand{\U}[1]{\protect\rule{.1in}{.1in}}
\providecommand{\U}[1]{\protect\rule{.1in}{.1in}}
\providecommand{\U}[1]{\protect\rule{.1in}{.1in}}
\providecommand{\U}[1]{\protect\rule{.1in}{.1in}}
\providecommand{\U}[1]{\protect\rule{.1in}{.1in}}
\providecommand{\U}[1]{\protect\rule{.1in}{.1in}}
\providecommand{\U}[1]{\protect\rule{.1in}{.1in}}
\providecommand{\U}[1]{\protect\rule{.1in}{.1in}}
\providecommand{\U}[1]{\protect\rule{.1in}{.1in}}
\providecommand{\U}[1]{\protect\rule{.1in}{.1in}}
\providecommand{\U}[1]{\protect\rule{.1in}{.1in}}
\providecommand{\U}[1]{\protect\rule{.1in}{.1in}}
\providecommand{\U}[1]{\protect\rule{.1in}{.1in}}
\providecommand{\U}[1]{\protect\rule{.1in}{.1in}}
\providecommand{\U}[1]{\protect\rule{.1in}{.1in}}
\providecommand{\U}[1]{\protect\rule{.1in}{.1in}}
\providecommand{\U}[1]{\protect\rule{.1in}{.1in}}
\providecommand{\U}[1]{\protect\rule{.1in}{.1in}}
\providecommand{\U}[1]{\protect\rule{.1in}{.1in}}
\providecommand{\U}[1]{\protect\rule{.1in}{.1in}}
\providecommand{\U}[1]{\protect\rule{.1in}{.1in}}
\providecommand{\U}[1]{\protect\rule{.1in}{.1in}}
\providecommand{\U}[1]{\protect\rule{.1in}{.1in}}
\providecommand{\U}[1]{\protect\rule{.1in}{.1in}}
\providecommand{\U}[1]{\protect\rule{.1in}{.1in}}
\providecommand{\U}[1]{\protect\rule{.1in}{.1in}}
\providecommand{\U}[1]{\protect\rule{.1in}{.1in}}
\providecommand{\U}[1]{\protect\rule{.1in}{.1in}}
\newtheorem{theorem}{Theorem}
{}
\newtheorem{corollary}{Corollary}
\newtheorem{definition}{Definition}
\newtheorem{proposition}{Proposition}
\newtheorem{remark}{Remark}
\newtheorem{summary}{Summary}
\newenvironment{proof}[1][Proof]{\textbf{#1.} }{\ \rule{0.5em}{0.5em}}
\begin{document}

\title{On the finite-zone periodic PT-symmetric potentials}
\author{O. A. Veliev\\{\small \ Depart. of Math., Dogus University, }\\{\small Ac\i badem, 34722, Kadik\"{o}y, \ Istanbul, Turkey.}\\\ {\small e-mail: oveliev@dogus.edu.tr}}
\date{}
\maketitle

\begin{abstract}
In this paper we find explicit conditions on the periodic PT-symmetric
complex-valued potential $q$ for which the number of gaps in the real part of
the spectrum of the one-dimensional Schrodinger operator $L(q)$ is finite.

Key Words: Schrodinger operator, PT-symmetric potential, finite-zone potentials.

AMS Mathematics Subject Classification: 34L05, 34L20.

\end{abstract}

\section{ Introduction}

We study the PT-symmetric periodic locally integrable potential $q$ for which
the real part of the spectrum of the operator $L(q)$ generated in
$L_{2}(-\infty,\infty)$ by the expression
\begin{equation}
-y^{^{\prime\prime}}(x)+q(x)y(x)
\end{equation}
contains a half line $[R,\infty)$ for some number $R$. Without loss of
generality, we assume that the period of $q$ is $1$ and the integral of $q$
over $[0,1]$ is zero. Thus
\begin{equation}
q\in L_{1}[0,1],\text{ }%
{\textstyle\int\nolimits_{0}^{1}}
q(x)dx=0,\text{ }q(x+1)=q(x),\text{ }\overline{q(-x)}=q(x).
\end{equation}

It is well-known that [8, 10] the spectrum $\sigma(L)$ of $L$ is the union of
the spectra $\sigma(L_{t})$ of $L_{t}$ for $t\in(-\pi,\pi]$ generated in
$L_{2}[0,1]$ by (1) and the boundary conditions
\[
y(1)=e^{it}y(0),\text{ }y^{^{\prime}}(1)=e^{it}y^{^{\prime}}(0).
\]
The spectrum of $L_{t}$ consists of the eigenvalues called as Bloch
eigenvalues. The eigenvalues of $L_{0}$ and $L_{\pi}$ are called periodic and
antiperiodic eigenvalues respectively. Both of them are called $2$-periodic
eigenvalues. If $q$ is PT-symmetric, then the following implications hold%
\begin{equation}
\lambda\in\sigma(L_{t})\Longrightarrow\overline{\lambda}\in\sigma
(L_{t}),\text{ }\lambda\in\sigma(L)\Longrightarrow\overline{\lambda}\in
\sigma(L).
\end{equation}
The first and second implications were proved in [6] and [11] respectively. A
basic mathematical question of PT-symmetric quantum mechanics concerns the
reality of the spectrum of the considered Hamiltonian (see [2, 9 and
references of them]). In the first papers [1, 3, 5] about the PT-symmetric
periodic potential, the appearance and disappearance of real energy bands for
some complex-valued PT-symmetric periodic potentials under perturbations have
been reported. Shin [11] showed that the appearance and disappearance of such
real energy bands imply the existence of nonreal band spectra. He involved
some condition on the Hill discriminant to show the existence of the nonreal
curves in the spectrum. Caliceti and Graffi [4] found explicit condition on
the Fourier coefficients of the potential providing the nonreal spectra for
small potentials.

In [13], we first considered the general spectral property of $L$ and proved
that the main part of $\sigma(L)$ is real and contains the large part of
$[0,\infty).$ Using this we found necessary and sufficient condition on $q$
for finiteness of the number of the nonreal arcs in $\sigma(L(q))$. Finally,
we considered the connections between spectrality of $L$ and the reality of
$\sigma(L)$.

This paper can be considered as continuation of [13]. Here using some results
of [13] we consider the potentials $q$ for which the real part
$\operatorname{Re}\left(  \sigma(L(q))\right)  $ of $\sigma(L(q))$\ contains a
half line, which implies that the number of gaps in $\operatorname{Re}\left(
\sigma(L(q))\right)  $ is finite. If $q$ is real, such potentials are called
finite-zone potentials (see for example [7] and its references). Thus we
consider the finite-zone PT-symmetric periodic complex-valued potentials. We
do not discuss the real finite-zone potentials, since the results and
approaches of the self-adjoint case are not used here and they have no
connections with the results of this paper.

However, to see the diversity of the results for the PT-symmetric finite-zone
potentials and the real finite-zone potentials let us discuss $\sigma(L(q))$
in term of $2$-periodic eigenvalues. If $q$ is real, then it is well-known
that [7] all Bloch eigenvalues are real and $\sigma(L(q))$ consists of the
intervals
\[
\Gamma_{1}=:[\lambda_{0}(a),\lambda_{1}^{-}(a)],\text{ }\Gamma_{2}%
=:[\lambda_{1}^{+}(a),\lambda_{2}^{-}(a)],\text{ }\Gamma_{3}=:[\lambda_{2}%
^{+}(a),\lambda_{3}^{-}(a)],\text{ }\Gamma_{4}=:[\lambda_{3}^{+}%
(a),\lambda_{4}^{-}(a)],...,
\]
where \ $\lambda_{0}(a)<\lambda_{1}^{-}(a)\leq\lambda_{1}^{+}(a)<$
$\lambda_{2}^{-}(a)\leq\lambda_{2}^{+}(a)<$ $\lambda_{3}^{-}(a)\leq\lambda
_{3}^{+}(a)<....$ are $2$-periodic eigenvalues. The bands $\Gamma_{n}$ and
$\Gamma_{n+1},...$ of the spectrum $\sigma(L(q))$ are separated by the gaps
$(\lambda_{n}^{-},\lambda_{n}^{+})$ if and only if $\lambda_{n}^{-}%
<\lambda_{n}^{+}.$ The last inequality holds if and only if $\lambda_{n}^{-}$
is a simple eigenvalue. Thus, it is clear that, the real potential is
finite-zone if and only if all but a finite number of $2$-periodic eigenvalues
are double eigenvalues.

In this paper we prove that the PT-symmetric periodic complex-valued potential
$q$ is finite-zone if and only if all but a finite number of $2$-periodic
eigenvalues are either double eigenvalues (Case 1) or nonreal eigenvalues
(Case 2). \ Thus for the PT-symmetric complex-valued periodic potentials there
is additional case (Case 2) that happens for the large class of the complex
potentials. This give us a possibility to construct a large class of
PT-symmetric finite-zone potentials. In this paper we find the explicit
conditions on PT-symmetric periodic complex-valued potential $q$ for which the
number of gaps in $\operatorname{Re}\left(  \sigma(L(q))\right)  $ is finite.

\section{Main Results}

In this section we list the results of [12, 13], which will be used in this
paper, as summaries and then prove the main results by using the summaries.
The results of [12] about numerations and asymptotic behavior of the Bloch
eigenvalues can be summarized as follows.

\begin{summary}
$(a)$ The eigenvalues of $L_{t}$ can be numbered (counting multiplicity) by
elements of $\mathbb{Z}$, i.e. $\sigma(L_{t})=\left\{  \lambda_{n}%
(t):n\in\mathbb{Z}\right\}  $, such that for each $n$ the function
$\lambda_{n}$ is continuous on $[0,\pi]$ and $\lambda_{n}(-t)=\lambda_{n}(t).$
Thus $\sigma(L)$ is the union of $\Gamma_{n}$ for $n\in\mathbb{Z},$ where
$\Gamma_{n}=\left\{  \lambda_{n}(t):t\in\lbrack0,\pi]\right\}  $ is a
continuous curve called as a band of $\sigma(L)$. The eigenvalues $\lambda
_{n}(0)$ and $\lambda_{n}(\pi)$ for $n\in\mathbb{Z}$ are periodic and
antiperiodic eigenvalues respectively.

$(b)$ For $h=1/20\pi,$ there exists $N$ such that for $|n|\geq N$ the
followings hold.

If $t\in\lbrack h,\pi-h]$ then the eigenvalue $\lambda_{n}(t)$ is simple and
\begin{equation}
\left\vert \lambda_{n}(t)-(2\pi n+t)^{2}\right\vert \leq n^{-1/2},\text{
}\left\vert \lambda_{n}(t)-\lambda_{k}(t)\right\vert >2\pi^{2}n,\text{
}\forall k\neq n.
\end{equation}
If $t\in\lbrack0,h],$ then there exist $2$ eigenvalues (counting
multiplicity), denoted by $\lambda_{n}(t)$ and $\lambda_{-n}(t)$ such that
\begin{equation}
\left\vert \lambda_{\pm n}(t)-(2\pi n+t)^{2}\right\vert <n,\text{ }\left\vert
\lambda_{\pm n}(t)-\lambda_{k}(t)\right\vert >2\pi^{2}n,\text{ }\forall
k\neq\pm n.
\end{equation}
If $t\in\lbrack\pi-h,\pi],$ then there exist $2$ eigenvalues (counting
multiplicity), denoted by $\lambda_{n}(t)$ and $\lambda_{-n-1}(t),$ such that%
\begin{align}
\left\vert \lambda_{n}(t)-(2\pi n+t)^{2}\right\vert  &  <n,\text{ }\left\vert
\lambda_{-n-1}(t)-(2\pi n+t)^{2}\right\vert <n,\\
\left\vert \lambda_{n}(t)-\lambda_{k}(t)\right\vert  &  >2\pi^{2}n,\text{
}\left\vert \lambda_{-n-1}(t)-\lambda_{k}(t)\right\vert >2\pi^{2}n,\text{
}\forall k\neq n,-(n+1).\nonumber
\end{align}

\end{summary}

The first results of [13] about $\sigma(L(q))$ with potential (2) can be
summarized as follows.

\begin{summary}
$(a)$ For each $n\in\mathbb{Z}$ the band $\Gamma_{n}$ is a single open curve
with the end points $\lambda_{n}(0)$ and $\lambda_{n}(\pi)$. Two bands
$\Gamma_{n}$ and $\Gamma_{m}$ may have at most one common point.

$(b)$ If $\lambda_{n}(t_{1})$ and $\lambda_{n}(t_{2})$ are real numbers, where
$0\leq t_{1}<t_{2}\leq\pi$ then $\left\{  \lambda_{n}(t):t\in\lbrack
t_{1},t_{2}]\right\}  $ is an interval of the real line and subset of
$\operatorname{Re}\left(  \Gamma_{n}\right)  :=\Gamma_{n}\cap\mathbb{R}$.

$(c)$ For $t\in\lbrack h,\pi-h]$ any eigenvalue $\lambda_{n}(t)$ and for
$t\in\lbrack0,h)\cup(\pi-h,\pi]$ any double eigenvalue $\lambda_{n}(t)$ is a
real number if $\left\vert n\right\vert \geq N$, where $h$ and $N$ are defined
in Summary 1.
\end{summary}

The Summary 2$(b)$ immediately implies that $\operatorname{Re}\left(
\Gamma_{n}\right)  $ is a connected subset of $\mathbb{R}$. Therefore the
following statement is true.

\begin{proposition}
$\operatorname{Re}\left(  \Gamma_{n}\right)  $ for each $n\in\mathbb{Z}$ is
either empty set or a point or an interval.
\end{proposition}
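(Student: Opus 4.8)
The plan is to leverage Summary 2(b) directly, since the statement is essentially a structural consequence of the connectedness it provides. First I would recall that $\operatorname{Re}(\Gamma_n) = \Gamma_n \cap \mathbb{R}$ is the set of real values attained by the continuous function $\lambda_n$ on $[0,\pi]$; that is, $\operatorname{Re}(\Gamma_n) = \{\lambda_n(t) : t \in [0,\pi],\ \lambda_n(t) \in \mathbb{R}\}$. The goal is to classify this set as empty, a single point, or an interval.

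The key observation is that Summary 2(b) already tells us that whenever two real values $\lambda_n(t_1)$ and $\lambda_n(t_2)$ are attained, the entire closed interval between them is contained in $\operatorname{Re}(\Gamma_n)$. I would phrase this precisely: if $\lambda_n(t_1), \lambda_n(t_2) \in \mathbb{R}$ with $t_1 < t_2$, then $\{\lambda_n(t) : t \in [t_1,t_2]\}$ is a genuine real interval lying inside $\operatorname{Re}(\Gamma_n)$, so in particular every real number between $\lambda_n(t_1)$ and $\lambda_n(t_2)$ belongs to $\operatorname{Re}(\Gamma_n)$. This is exactly the intermediate-value / order-convexity property: $\operatorname{Re}(\Gamma_n)$ is convex as a subset of $\mathbb{R}$. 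Thus the real part is a connected subset of $\mathbb{R}$, as the excerpt notes right before the Proposition.

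From here the argument is a short case analysis on the structure of convex subsets of $\mathbb{R}$. A convex subset of $\mathbb{R}$ is either empty, a singleton, or contains at least two distinct points; in the last case, convexity forces it to contain the whole segment between any two of its points, so it is an interval (of one of the usual types, possibly degenerate or unbounded in principle, though boundedness considerations are not needed for the stated trichotomy). So I would split into the cases: (i) no $t$ gives a real value, whence $\operatorname{Re}(\Gamma_n) = \varnothing$; (ii) exactly one real value is attained (or all attained real values coincide), whence it is a point; (iii) at least two distinct real values are attained, whence by the convexity just established it is an interval.

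I do not anticipate a serious obstacle here, since Summary 2(b) is doing essentially all of the work and the remainder is the elementary fact that a convex (equivalently, order-connected) subset of the real line falls into precisely these three types. The only point requiring mild care is confirming that $\operatorname{Re}(\Gamma_n)$ really is order-convex rather than merely topologically connected in $\mathbb{C}$: one must apply Summary 2(b) to an arbitrary pair of attained real values and read off that all intermediate reals are attained as well, rather than simply quoting connectedness of the full curve $\Gamma_n$. Once that is made explicit, the trichotomy follows immediately and the proof is complete.
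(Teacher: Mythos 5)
Your proposal is correct and follows essentially the same route as the paper: the paper likewise deduces from Summary 2$(b)$ that $\operatorname{Re}\left(\Gamma_{n}\right)$ is a connected (equivalently, order-convex) subset of $\mathbb{R}$ and then invokes the trichotomy for such sets. Your version merely makes explicit the order-convexity step that the paper declares ``immediate.''
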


Now using the notations and inequalities of Summary 1 we consider
$\operatorname{Re}\left(  \Gamma_{n}\right)  $ for $\left\vert n\right\vert
\geq N.$ For this introduce the following notations for $n\geq N:$
\begin{align}
a_{n}  &  =\inf\left\{  t\in\lbrack0,h]:\lambda_{n}(t)\in\mathbb{R}\right\}
,\text{ }b_{n}=\sup\left\{  t\in\lbrack\pi-h,\pi]:\lambda_{n}(t)\in
\mathbb{R}\right\}  ,\nonumber\\
b_{-n}  &  =\inf\left\{  t\in\lbrack0,h]:\lambda_{-n}(t)\in\mathbb{R}\right\}
,\text{ }a_{-n}=\sup\left\{  t\in\lbrack\pi-h,\pi]:\lambda_{-n}(t)\in
\mathbb{R}\right\}  ,\\
A_{n}  &  =\lambda_{n}(a_{n}),\text{ }B_{n}=\lambda_{n}(b_{n}).\nonumber
\end{align}
Since $\lambda_{\pm n}(h)$ and $\lambda_{\pm n}(\pi-h)$ are real number (see
Summary 2$(c))$ and $\lambda_{\pm n}$ is continuous on $[0,\pi]$ (see Summary
1$(a)$), $\inf$ and $\sup$ in (7) exist.

\begin{theorem}
If (2) holds, then the numbers $\lambda_{n}(a_{n})$ and $\lambda_{n}(b_{n})$
for $\left\vert n\right\vert \geq N$ are the real numbers and%
\begin{equation}
\lambda_{n}(a_{n})<\lambda_{n}(b_{n}),\text{ }\operatorname{Re}\left(
\Gamma_{n}\right)  =[A_{n},B_{n}].
\end{equation}

\end{theorem}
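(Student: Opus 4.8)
The plan is to use the notations in (7) together with the asymptotic
and continuity information from Summaries 1 and 2. I focus on $n\geq N$;
the case $n\leq -N$ is symmetric under the substitution relating
$\lambda_{-n}$ to $\lambda_n$. The key point is that
$\operatorname{Re}(\Gamma_n)=\{\operatorname{Re}\lambda_n(t):\lambda_n(t)\in\mathbb{R},\ t\in[0,\pi]\}$,
and by Proposition 1 (via Summary 2$(b)$) this set is a single point, an
interval, or empty. Since $\lambda_n(h)$ and $\lambda_n(\pi-h)$ are real
(Summary 2$(c)$), $\operatorname{Re}(\Gamma_n)$ is nonempty and in fact
contains the real interval $\{\lambda_n(t):t\in[h,\pi-h]\}$, so it is
genuinely an interval, not just a point.

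First I would show that $\{\lambda_n(t):t\in[a_n,b_n]\}$ is exactly this
interval and that its endpoints are $A_n=\lambda_n(a_n)$ and
$B_n=\lambda_n(b_n)$. By the definition of $a_n$ as an infimum and the
continuity of $\lambda_n$ on $[0,\pi]$, the value $\lambda_n(a_n)$ is a
limit of real numbers $\lambda_n(t)$ with $t\downarrow a_n$, hence real;
the same argument applied at $b_n$ shows $\lambda_n(b_n)$ is real. Next,
$[a_n,h]\subset\{t:\lambda_n(t)\in\mathbb{R}\}$ and
$[\pi-h,b_n]\subset\{t:\lambda_n(t)\in\mathbb{R}\}$ by definition of the
inf and sup together with the intermediate-value behaviour of the real
curve (Summary 2$(b)$), and on $[h,\pi-h]$ every $\lambda_n(t)$ is real
by Summary 2$(c)$. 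Therefore $\lambda_n(t)\in\mathbb{R}$ for all
$t\in[a_n,b_n]$, and by Summary 2$(b)$ the image
$\{\lambda_n(t):t\in[a_n,b_n]\}$ is a closed real interval whose
endpoints are the extreme values $A_n$ and $B_n$. Any $t$ outside
$[a_n,b_n]$ in $[0,\pi]$ with $\lambda_n(t)\in\mathbb{R}$ would, by the
same connectedness, have $\lambda_n(t)$ already lying in this interval,
so $\operatorname{Re}(\Gamma_n)=[A_n,B_n]$.

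The remaining and main task is the strict inequality
$A_n=\lambda_n(a_n)<\lambda_n(b_n)=B_n$. I would establish this by
localizing both endpoints using the asymptotic estimates of Summary 1.
By (5) the eigenvalue $\lambda_n(t)$ for $t\in[0,h]$ lies within distance
$n$ of $(2\pi n+t)^2$, so $A_n=\lambda_n(a_n)$ lies near
$(2\pi n+a_n)^2\approx(2\pi n)^2$; by (6) the eigenvalue $\lambda_n(t)$
for $t\in[\pi-h,\pi]$ lies within distance $n$ of $(2\pi n+t)^2$, so
$B_n=\lambda_n(b_n)$ lies near $(2\pi n+b_n)^2\approx(2\pi n+\pi)^2$.
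Since $a_n,b_n\in[0,h]\cup[\pi-h,\pi]$ with $h=1/20\pi$, the two
reference squares $(2\pi n+a_n)^2$ and $(2\pi n+b_n)^2$ differ by
roughly $(2\pi n)(\pi-2h)$, a quantity of order $n$ that strictly
dominates the two error terms of size at most $n$ once the implied
constants are tracked; more care is needed precisely because both the
separation and the errors are of order $n$. The cleanest route is to use
the real-intervalness from Summary 2$(b)$ together with the separation
bound $|\lambda_n(t)-\lambda_k(t)|>2\pi^2 n$ for $k\neq n$ in (4)--(6):
this prevents the band $\Gamma_n$ from collapsing to a point, forcing
$A_n\neq B_n$, and since $A_n,B_n$ are the endpoints of the real
interval $\operatorname{Re}(\Gamma_n)$ with $A_n$ near the smaller
square, we get $A_n<B_n$. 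The hard part will be making the endpoint
localization rigorous, i.e. confirming that the error bounds in Summary 1
do not swamp the $O(n)$ gap between the two reference points; I expect to
resolve this by choosing $N$ large enough (as already permitted in
Summary 1) so that the separation estimate in (4)--(6) alone guarantees
the strict inequality.
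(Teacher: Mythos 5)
Most of your outline tracks the paper's own proof: reality of $\lambda_{n}(a_{n})$ and $\lambda_{n}(b_{n})$ from the definitions in (7) plus continuity, the inclusion $[A_{n},B_{n}]\subset\operatorname{Re}(\Gamma_{n})$ via Summary 2$(b)$, and the localization of $A_{n}$ near $(2\pi n+a_{n})^{2}$ and $B_{n}$ near $(2\pi n+b_{n})^{2}$ for the strict inequality. But the step you yourself flag as ``the hard part'' is resolved by a non sequitur. The bound $\left\vert \lambda_{n}(t)-\lambda_{k}(t)\right\vert >2\pi^{2}n$ for $k\neq n$ compares \emph{distinct} eigenvalue branches at the \emph{same} $t$; it says nothing about how $\lambda_{n}$ varies along its own band, so it cannot prevent $\operatorname{Re}(\Gamma_{n})$ from degenerating to a point, and even ``$A_{n}\neq B_{n}$'' would not give the ordering $A_{n}<B_{n}$. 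Also, ``choosing $N$ large enough'' is not an available move ($N$ is fixed by Summary 1), and it would not help. The good news is that your first, direct argument already closes with no implied constants to track: (5) and (6) give errors strictly less than $n$ each, while $a_{n}\in\lbrack0,h]$, $b_{n}\in\lbrack\pi-h,\pi]$ give $(2\pi n+b_{n})^{2}-(2\pi n+a_{n})^{2}\geq(\pi-2h)(4\pi n+\pi)>39\,n$ for $h=1/20\pi$, so $B_{n}-A_{n}>39n-2n>0$. This explicit computation is exactly what the paper means by ``the inequality in (8) follows from (5) and (6).''

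There is a second gap in the endpoint identification. You assert that $\left\{ \lambda_{n}(t):t\in\lbrack a_{n},b_{n}]\right\} $ is a real interval ``whose endpoints are the extreme values $A_{n}$ and $B_{n}$,'' but Summary 2$(b)$ only yields that the image is a real interval; its endpoints are the minimum and maximum of $\lambda_{n}$ over $[a_{n},b_{n}]$, which need not be attained at $t=a_{n}$ and $t=b_{n}$ unless $\lambda_{n}$ is monotone there. The paper supplies this from Summary 2$(a)$: $\Gamma_{n}$ is a single open (simple) curve, so $\lambda_{n}$ is injective, hence monotone on the real stretch, and the image is exactly $[A_{n},B_{n}]$; without that, $\operatorname{Re}(\Gamma_{n})$ could a priori extend beyond $A_{n}$ or $B_{n}$ and the claimed equality would fail. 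Finally, your closing step (no real values of $\lambda_{n}(t)$ for $t\notin\lbrack a_{n},b_{n}]$) should be read off directly from the definitions of $a_{n}$ and $b_{n}$ --- for $t\in\lbrack0,a_{n})\cup(b_{n},\pi]$ the eigenvalue is nonreal by the very meaning of the inf and sup (this is (10) in the paper); the ``connectedness'' justification you give is circular, since it presupposes that all real values already lie in $[A_{n},B_{n}]$.
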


\begin{proof}
The reality of $\lambda_{n}(a_{n})$ and $\lambda_{n}(b_{n})$ follows from the
definitions of $a_{n}$ and $b_{n}$ and the continuity of $\lambda_{n}.$ The
inequality in (8) follows from (5) and (6). Therefore using Summary 2$(b)$ and
taking into account that $\Gamma_{n}$ is a single open curve (see Summary
2$(a)$) we get
\begin{equation}
\left\{  \lambda_{n}(t):t\in\lbrack a_{n},b_{n}]\right\}  =[A_{n}%
,B_{n}]\subset\operatorname{Re}\left(  \Gamma_{n}\right)  .
\end{equation}
On the other hand, it follows from the definition of $a_{n}$ and $b_{n}$ that
\begin{equation}
\left\{  \lambda_{n}(t):t\in\lbrack0,a_{n})\right\}  \cap\mathbb{R}%
=\varnothing,\text{ }\left\{  \lambda_{n}(t):t\in(b_{n},\pi]\right\}
\cap\mathbb{R}=\varnothing.
\end{equation}
Thus the equality in (8) follows from (9) and (10)
\end{proof}

\begin{theorem}
Suppose that $q$ satisfies (2). 

$(a)$ The following inequalities hold
\begin{equation}
A_{-n}<B_{-n}\leq A_{n}<B_{n}\leq A_{-n-1}<B_{-n-1}%
\end{equation}
for $n\geq N,$ where $N$ is defined in Summary 1. Moreover, $\sigma
(L(q))\cap\lbrack A_{-N},\infty)$ is the union of the intervals $[A_{k}%
,B_{k}]$ for $\left\vert k\right\vert \geq N.$

$(b)$ The gaps in $\sigma(L(q))\cap\lbrack A_{-N},\infty)$ are $(B_{-n}%
,A_{n})$ and $(B_{n},A_{-n-1})$ for $n\geq N$

if $B_{-n}<A_{n}$ and $B_{n}<A_{-n-1}$ \ respectively.

$(c)$ $\sigma(L)\cap\lbrack A_{-N},\infty)=[A_{-N},\infty)$ if and only if
$B_{-n}=A_{n}$ and $B_{n}=A_{-n-1}$ for $n\geq N.$
\end{theorem}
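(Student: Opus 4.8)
The plan is to prove Theorem 3, whose three parts describe the structure of the spectrum above $A_{-N}$, by leveraging the numeration and asymptotic estimates of Summary 1 together with the characterization $\operatorname{Re}(\Gamma_n)=[A_n,B_n]$ established in Theorem 2. The central observation is that for $|n|\geq N$ every band $\Gamma_n$ contributes exactly the real interval $[A_n,B_n]$ to the spectrum, so understanding $\sigma(L(q))\cap[A_{-N},\infty)$ reduces to understanding how these intervals are ordered and whether consecutive ones touch.

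\textbf{Part (a).} First I would establish the ordering in (11). The estimates (4)--(6) of Summary 1 locate each $\lambda_n(t)$ near $(2\pi n+t)^2$ with error $o(n)$, and the separation bounds $|\lambda_n(t)-\lambda_k(t)|>2\pi^2 n$ guarantee that distinct bands with indices of comparable size cannot overlap away from the endpoints. Concretely, $A_{\pm n}$ and $B_{\pm n}$ are the real endpoints attained at $t=a_{\pm n}$ and $t=b_{\pm n}$; using the asymptotics one checks that $\lambda_{-n}$ lives near $(2\pi n - t)^2 \approx (2\pi n)^2$ for $t$ near $\pi$ (where $a_{-n}$ sits) and that the four quantities $A_{-n}, B_{-n}, A_n, B_n$ fall into increasing position because the underlying unperturbed eigenvalues $(2\pi n - t)^2$ and $(2\pi n + t)^2$ are monotone in $t$ on the relevant subintervals. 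The weak inequalities $B_{-n}\leq A_n$ and $B_n\leq A_{-n-1}$ come from the fact that the curves $\Gamma_{-n}$ and $\Gamma_n$ (resp. $\Gamma_n$ and $\Gamma_{-n-1}$) can meet in at most one common point by Summary 2$(a)$, so their real parts are either disjoint or share a single endpoint. For the ``union'' claim, I would argue that $\sigma(L(q))\cap[A_{-N},\infty)=\bigcup_{|k|\geq N}\Gamma_k\cap[A_{-N},\infty)$, and that for $|k|\geq N$ the intersection $\Gamma_k\cap\mathbb{R}=\operatorname{Re}(\Gamma_k)=[A_k,B_k]$; the bands with $|k|<N$ lie below $A_{-N}$ by the same asymptotic ordering, so they do not contribute above $A_{-N}$.

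\textbf{Parts (b) and (c).} These follow formally once (a) is in hand. Given the nested ordering (11), the complement of $\bigcup_{|k|\geq N}[A_k,B_k]$ within $[A_{-N},\infty)$ consists precisely of the open intervals lying strictly between consecutive blocks, namely $(B_{-n},A_n)$ and $(B_n,A_{-n-1})$, and such an interval is a genuine gap exactly when the corresponding weak inequality in (11) is strict. This gives (b). Part (c) is the contrapositive packaging: $\sigma(L)\cap[A_{-N},\infty)$ equals the full half line $[A_{-N},\infty)$ if and only if there are no gaps, i.e. if and only if every weak inequality is in fact an equality, $B_{-n}=A_n$ and $B_n=A_{-n-1}$ for all $n\geq N$.

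The main obstacle I anticipate is Part (a), specifically proving the ordering (11) rigorously rather than heuristically. The asymptotic estimates (4)--(6) place the eigenvalues in windows of width $o(n)$ around $(2\pi n\pm t)^2$, but one must verify that these windows, together with the endpoint positions $a_{\pm n}$ and $b_{\pm n}$ (which depend on where the curve first or last touches $\mathbb{R}$), are genuinely ordered and non-interleaving. The delicate point is controlling the behavior near $t=0$ and $t=\pi$, where two eigenvalues coalesce and Summary 1 only gives the coarser bound $|\lambda_{\pm n}(t)-(2\pi n+t)^2|<n$; here I would need to combine the one-common-point property from Summary 2$(a)$ with the reality-of-double-eigenvalues statement in Summary 2$(c)$ to pin down the relative location of $A_n$ and $B_{-n}$ and exclude any crossing. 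Once the ordering and the single-interval description of each $\operatorname{Re}(\Gamma_k)$ are secured, the remaining bookkeeping for (b) and (c) is routine.
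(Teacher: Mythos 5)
Your proposal follows essentially the same route as the paper's proof: the coarse ordering of $A_{\pm n},B_{\pm n}$ comes from the asymptotics (4)--(6), the weak inequalities $B_{-n}\leq A_{n}$ and $B_{n}\leq A_{-n-1}$ come from Summary 2$(a)$ (an overlap would give the two bands a common subinterval rather than at most one common point), the union statement uses $\operatorname{Re}\left(\Gamma_{k}\right)=[A_{k},B_{k}]$ from Theorem 1 together with the fact that bands with $\left\vert k\right\vert <N$ stay below the half line, and $(b)$, $(c)$ are treated as formal consequences of $(a)$, exactly as in the paper. One small correction that does not affect the argument: for $t$ near $\pi$ the eigenvalue $\lambda_{-n}(t)$ lies near $(2\pi n-\pi)^{2}$, not near $(2\pi n)^{2}$, so $A_{-n}\approx(2\pi n-\pi)^{2}$ while $B_{-n}\approx(2\pi n)^{2}$.
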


\begin{proof}
$(a)$ It readily follows from inequalities (5) and (6) that $A_{-n}<B_{n}$ for
$n\geq N.$ Therefore if $B_{-n}>A_{n}$ then the intervals $[A_{-n},B_{-n}]$
and $[A_{n},B_{n}]$ have common subinterval that contradicts Summary 2$(a)$.
Thus $B_{-n}\leq A_{n}$ and by (8) we have $A_{-n}<B_{-n}\leq A_{n}<B_{n}.$ In
the same way, we prove the other part of (11). Now (8) and (11) imply that
$\left(  \sigma(L(q))\cap\lbrack A_{-N},\infty)\right)  \supset\lbrack
A_{n},B_{n}]$ for $\left\vert n\right\vert \geq N$. On the other hand, it
follows from (4)-(6) that $\lambda_{n}(t)\notin\lbrack B_{-N},\infty)$ for
$t\in\left[  0,\pi\right]  $ and $\left\vert n\right\vert <N.$ Thus $(a)$ is
proved. The proofs of $(b)$ and $(c)$ follow from $(a)$
\end{proof}

Now we consider in detail the real parts of the bands $\Gamma_{n}$ of
$\sigma(L(q))$ with potential (2) by using the results of Theorems 4 and 5 of
[13] formulated in the suitable form as the following Summary.

\begin{summary}
Let $h$ and $N$ be the numbers defined in Summary 1 and $n\geq N.$ The
eigenvalue $\lambda_{n}(0)$ is a nonreal number if and only if there exists
$\varepsilon_{n}\in(0,h)$ such that
\begin{equation}
\left\{  \lambda_{n}(t):t\in\lbrack0,\varepsilon_{n})\right\}  \cap
\mathbb{R}=\varnothing,\text{ }\lambda_{n}(\varepsilon_{n})=\lambda
_{-n}(\varepsilon_{n})\in\mathbb{R}.
\end{equation}
The eigenvalue $\lambda_{n}(\pi)$ is a nonreal number if and only if there
exists $\delta_{n}\in(\pi-h,\pi)$ such that
\begin{equation}
\left\{  \lambda_{n}(t):t\in(\delta_{n},\pi]\right\}  \cap\mathbb{R}%
=\varnothing,\text{ }\lambda_{n}(\delta_{n})=\lambda_{-n-1}(\delta_{n}%
)\in\mathbb{R}.
\end{equation}

\end{summary}

The following proposition is a consequence of (7) and Summary 3.

\begin{proposition}
Let $N$ be the number defined in Summary 1 and $n\geq N.$

$(a)$ If $\lambda_{n}(0)$ ($\lambda_{n}(\pi))$ is a real number, then
$A_{n}=\lambda_{n}(0)$ ($B_{n}=\lambda_{n}(\pi).$

$(b)$ If $\lambda_{-n}(\pi)$ ($\lambda_{-n}(0))$ is a real number, then
$A_{-n}=\lambda_{-n}(\pi)$ ($B_{-n}=\lambda_{-n}(0))$.

$(c)$ If $\lambda_{n}(0)$ ($\lambda_{n}(\pi))$ is a nonreal number, then
$A_{n}=\lambda_{n}(\varepsilon_{n})$ ($B_{n}=\lambda_{n}(\delta_{n}).$

$(d)$ If $\lambda_{-n}(\pi)$ ($\lambda_{-n}(0))$ is a nonreal number, then
$A_{-n}=\lambda_{-n}(\delta_{n})$ ($B_{-n}=\lambda_{-n}(\varepsilon_{n}))$.
\end{proposition}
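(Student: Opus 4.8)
The plan is to prove Proposition 7 as a direct corollary of the definitions in (7) combined with Summary 3. The four parts are symmetric, so I would set up the argument for the $\lambda_n(0)$ case of part $(a)$ and $(c)$ carefully, then indicate that the remaining cases follow by the same reasoning applied at the other endpoint $t=\pi$ and to the index $-n$.

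First I would recall that by definition $A_n = \lambda_n(a_n)$ where $a_n = \inf\{t \in [0,h] : \lambda_n(t) \in \mathbb{R}\}$, and $B_n = \lambda_n(b_n)$ where $b_n = \sup\{t \in [\pi-h,\pi] : \lambda_n(t) \in \mathbb{R}\}$. For part $(a)$, suppose $\lambda_n(0)$ is real. Then $0$ lies in the set over which the infimum defining $a_n$ is taken, and since $0$ is the left endpoint of $[0,h]$, the infimum is attained at $a_n = 0$. Hence $A_n = \lambda_n(a_n) = \lambda_n(0)$. The statement for $\lambda_n(\pi)$ is entirely parallel: if $\lambda_n(\pi)$ is real, then $\pi$ belongs to the set whose supremum defines $b_n$, and being the right endpoint of $[\pi-h,\pi]$ it forces $b_n = \pi$, giving $B_n = \lambda_n(\pi)$.

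For part $(c)$, suppose instead that $\lambda_n(0)$ is nonreal. Then Summary 3 supplies an $\varepsilon_n \in (0,h)$ with $\{\lambda_n(t) : t \in [0,\varepsilon_n)\} \cap \mathbb{R} = \varnothing$ and $\lambda_n(\varepsilon_n) \in \mathbb{R}$. The first condition says no point of $[0,\varepsilon_n)$ belongs to the set defining $a_n$, so $a_n \geq \varepsilon_n$; the second condition says $\varepsilon_n$ itself does belong to that set, so $a_n \leq \varepsilon_n$. Hence $a_n = \varepsilon_n$ and $A_n = \lambda_n(\varepsilon_n)$. The $\lambda_n(\pi)$ claim follows identically from (13), identifying $b_n = \delta_n$.

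Parts $(b)$ and $(d)$ are obtained by applying exactly these arguments to the index $-n$, taking into account that the roles of the two endpoints are exchanged in the definitions (7): for $\lambda_{-n}$ the infimum over $[0,h]$ defines $b_{-n}$ and the supremum over $[\pi-h,\pi]$ defines $a_{-n}$. I do not expect any genuine obstacle here; the only thing requiring care is bookkeeping — matching each real/nonreal hypothesis to the correct one of the four quantities $A_n, B_n, A_{-n}, B_{-n}$ and to the correct clause (12) or (13) of Summary 3, since the indexing of the band curves swaps which endpoint of $[0,\pi]$ is relevant for positive versus negative indices.
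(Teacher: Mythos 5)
Parts $(a)$, $(b)$, and $(c)$ of your argument are correct, and they match the paper's intent: the paper offers no written proof beyond the remark that the proposition "is a consequence of (7) and Summary 3," and your inf/sup bookkeeping is the right way to fill that in. In particular, for $(c)$ the two-sided squeeze $a_n\geq\varepsilon_n$ (from the emptiness condition in (12)) and $a_n\leq\varepsilon_n$ (from $\lambda_n(\varepsilon_n)\in\mathbb{R}$) is exactly what is needed.

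The gap is in part $(d)$. You claim it follows by "applying exactly these arguments to the index $-n$," but Summary 3 is stated only for the curves $\lambda_n$ with $n\geq N$: its hypotheses concern nonreality of $\lambda_n(0)$ and $\lambda_n(\pi)$, and conditions (12), (13) constrain only the curve $\lambda_n$ away from the merge point. Nothing in (7) or Summary 3 tells you (i) that nonreality of $\lambda_{-n}(0)$ produces any $\varepsilon_n$ at all, or (ii) that $\left\{\lambda_{-n}(t):t\in[0,\varepsilon_n)\right\}\cap\mathbb{R}=\varnothing$, which is what you need for the lower bound $b_{-n}\geq\varepsilon_n$; condition (12) only gives the upper bound $b_{-n}\leq\varepsilon_n$ via $\lambda_{-n}(\varepsilon_n)\in\mathbb{R}$. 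The missing idea is the PT-symmetry pairing: by (3), for $t\in[0,h]$ the conjugate $\overline{\lambda_n(t)}$ is again an eigenvalue of $L_t$, and by the localization and separation inequalities (5) it must be one of $\lambda_{\pm n}(t)$; hence whenever $\lambda_n(t)$ is nonreal one has $\lambda_{-n}(t)=\overline{\lambda_n(t)}\notin\mathbb{R}$, and conversely. This simultaneously shows that $\lambda_{-n}(0)$ is nonreal if and only if $\lambda_n(0)$ is, and transfers the "no real values on $[0,\varepsilon_n)$" property from $\lambda_n$ to $\lambda_{-n}$, yielding $b_{-n}=\varepsilon_n$ and $B_{-n}=\lambda_{-n}(\varepsilon_n)$. (This counting argument is precisely the one the paper spells out later, in the proof of Theorem 3$(a)$.) The same repair is needed at $t=\pi$, together with an index shift you did not flag: near $\pi$ the curve $\lambda_{-n}$ is paired by Summary 1$(b)$ with $\lambda_{n-1}$, not $\lambda_n$ (since $-n=-(n-1)-1$), so the relevant merge parameter is the $\delta$ attached to the index $n-1$ in (13) — a notational looseness present in the paper's own statement of $(d)$, but one your proof should make explicit rather than inherit.
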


\begin{definition}
$\lambda_{n}(t)$ is called SR eigenvalue if it is simple eigenvalue and real number.
\end{definition}

\begin{theorem}
Suppose that (2) holds and $n\geq N,$ where $N$ is defined in Summary 1.

$(a)$ If $\lambda_{n}(0)$ ($\lambda_{n}(\pi)$) is a SR eigenvalue, then
$\lambda_{-n}(0)$ ($\lambda_{-n-1}(\pi))$ is also SR eigenvalue, $\lambda
_{-n}(0)<\lambda_{n}(0)$ ($\lambda_{n}(\pi)<\lambda_{-n-1}(\pi)$) and $\left(
\lambda_{-n}(0),\lambda_{n}(0)\right)  $ ($\left(  \lambda_{n}(\pi
),\lambda_{-n-1}(\pi)\right)  $) is the gap of $\sigma(L)\cap\lbrack
a_{-N},\infty)$ lying between $\operatorname{Re}\left(  \Gamma_{-n}\right)  $
and $\operatorname{Re}\left(  \Gamma_{n}\right)  $ ($\operatorname{Re}\left(
\Gamma_{n}\right)  $ and $\operatorname{Re}\left(  \Gamma_{-n-1}\right)  $).

$(b)$ If $\lambda_{n}(0)$ ($\lambda_{n}(\pi)$) is not a SR eigenvalue then
there is not a gap between $\operatorname{Re}\left(  \Gamma_{-n}\right)  $ and
$\operatorname{Re}\left(  \Gamma_{n}\right)  $ ($\operatorname{Re}\left(
\Gamma_{n}\right)  $ and $\operatorname{Re}\left(  \Gamma_{-n-1}\right)  $).

$(c)$ The potential $q$ is finite-zone if and only if the total number of the
SR periodic and antiperiodic eigenvalues is finite.
\end{theorem}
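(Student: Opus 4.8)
The plan is to derive $(a)$ and $(b)$ from three ingredients already in place—the pairing of the large $2$-periodic eigenvalues in Summary 1$(b)$, the conjugation symmetry (3), and the explicit identification of the endpoints $A_n,B_n$ in Proposition 7, ordered and localized by Theorem 5—and then to obtain $(c)$ as a bookkeeping corollary counting the resulting gaps.

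For $(a)$, assume $\lambda_n(0)$ is SR. By (5) at $t=0$, the only eigenvalues of $L_0$ within distance $n$ of $(2\pi n)^2$ are $\lambda_n(0)$ and $\lambda_{-n}(0)$ (counting multiplicity), all others being at distance $>2\pi^2 n$; since $\lambda_n(0)$ is simple these two values are distinct, so $\lambda_{-n}(0)$ is simple as well. To get reality I would invoke (3): $\overline{\lambda_{-n}(0)}\in\sigma(L_0)$ satisfies $|\overline{\lambda_{-n}(0)}-(2\pi n)^2|=|\lambda_{-n}(0)-(2\pi n)^2|<n$, so it equals $\lambda_n(0)$ or $\lambda_{-n}(0)$; the first option forces $\lambda_{-n}(0)=\overline{\lambda_n(0)}=\lambda_n(0)$, contradicting distinctness, hence $\overline{\lambda_{-n}(0)}=\lambda_{-n}(0)$ and $\lambda_{-n}(0)$ is SR. Proposition 7$(a),(b)$ then give $A_n=\lambda_n(0)$ and $B_{-n}=\lambda_{-n}(0)$, and combining $B_{-n}\le A_n$ from Theorem 5$(a)$ with $\lambda_{-n}(0)\ne\lambda_n(0)$ yields the strict inequality and identifies $(\lambda_{-n}(0),\lambda_n(0))=(B_{-n},A_n)$ as the gap between $\operatorname{Re}(\Gamma_{-n})$ and $\operatorname{Re}(\Gamma_n)$ named in Theorem 5$(b)$. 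The antiperiodic claim is verbatim the same with the pair $\lambda_{\pm n}(0)$ replaced by the pair $\lambda_n(\pi),\lambda_{-n-1}(\pi)$ provided by Summary 1$(b)$ near $((2n+1)\pi)^2$.

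For $(b)$, if $\lambda_n(0)$ is not SR it is either a real double eigenvalue or nonreal. In the double case $\lambda_n(0)=\lambda_{-n}(0)$, so Proposition 7$(a),(b)$ give $A_n=B_{-n}$. In the nonreal case (3) forces $\lambda_{-n}(0)=\overline{\lambda_n(0)}$ to be nonreal too, so Summary 3 supplies $\varepsilon_n\in(0,h)$ with $\lambda_n(\varepsilon_n)=\lambda_{-n}(\varepsilon_n)\in\mathbb{R}$, and Proposition 7$(c),(d)$ give $A_n=\lambda_n(\varepsilon_n)=\lambda_{-n}(\varepsilon_n)=B_{-n}$. Either way $B_{-n}=A_n$, so $[A_{-n},B_{-n}]$ and $[A_n,B_n]$ abut and no gap separates $\operatorname{Re}(\Gamma_{-n})$ from $\operatorname{Re}(\Gamma_n)$; the antiperiodic statement is symmetric.

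For $(c)$ I would use Theorem 5$(a)$, which presents $\sigma(L)\cap[A_{-N},\infty)$ as the monotonically ordered union of the intervals $[A_k,B_k]$, $|k|\ge N$, whose only candidate gaps are $(B_{-n},A_n)$ and $(B_n,A_{-n-1})$ for $n\ge N$. By $(a)$ and $(b)$ the first is present exactly when $\lambda_n(0)$ is SR and the second exactly when $\lambda_n(\pi)$ is SR, so the number of gaps above $A_{-N}$ equals $\#\{n\ge N:\lambda_n(0)\text{ SR}\}+\#\{n\ge N:\lambda_n(\pi)\text{ SR}\}$. Since only finitely many bands lie below $A_{-N}$, finiteness of the gap count in $\operatorname{Re}(\sigma(L))$ is equivalent to finiteness of this quantity, and finite-zoneness is by definition finiteness of the number of gaps. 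The hard part is the final translation from counting gaps to counting SR eigenvalues: one must use the factor-of-two pairing from $(a)$ (each SR $\lambda_n(0)$ comes with a distinct SR $\lambda_{-n}(0)$, and likewise at $\pi$) and note that the finitely many $2$-periodic eigenvalues with $|n|<N$ cannot affect finiteness, so that finiteness of the gap count is equivalent to finiteness of the total number of SR periodic and antiperiodic eigenvalues.
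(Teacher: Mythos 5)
Your proposal is correct and follows essentially the same route as the paper's own proof: in $(a)$ you use the two-eigenvalue cluster of Summary 1$(b)$ plus the PT-symmetry implication (3) to force $\lambda_{-n}(0)$ (resp. $\lambda_{-n-1}(\pi)$) to be SR, then the endpoint identifications and the ordering of the intervals $[A_k,B_k]$ to produce the gap; in $(b)$ you make the same double/nonreal case split via (12)--(13); and in $(c)$ you do the same gap-versus-SR-eigenvalue count. The only cosmetic difference is that you phrase the reality step as a pigeonhole on the two-element cluster (ruling out $\overline{\lambda_{-n}(0)}=\lambda_n(0)$), whereas the paper derives a contradiction from the appearance of a third eigenvalue in the cluster --- the same argument in different words.
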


\begin{proof}
$(a)$\ Suppose that $\lambda_{n}(0)$ is a SR eigenvalue. By Summary 1$(b)$
(see (5)) there exist 2 eigenvalues counting multiplicity lying in
$n$-neighborhood of $(2\pi n)^{2}$ and these eigenvalues are denoted by
$\lambda_{-n}(0)$ and $\lambda_{n}(0).$ Therefore $\lambda_{-n}(0)$ is also a
simple eigenvalue. If $\lambda_{-n}(0)$ is a nonreal eigenvalue, then by (3),
$\overline{\lambda_{-n}(0)}$ is also periodic eigenvalue lying in
$n$-neighborhood of $(2\pi n)^{2}.$ Then we obtain that there exist $3$
eigenvalues lying in $n$-neighborhood of $(2\pi n)^{2}$ which contradicts
Summary 1$(b).$ Thus $\lambda_{-n}(0)$ is also SR eigenvalue. Therefore, by
Proposition 2$(a),$ we have $A_{n}=\lambda_{n}(0)$ and $B_{-n}=\lambda
_{-n}(0)$ and $A_{n}\neq B_{-n}.$ It with (11) and Theorem 2$(b)$ implies that
$B_{-n}<A_{n}$ and $\left(  \lambda_{-n}(0),\lambda_{n}(0)\right)  $ is the
gap in the spectrum. The proof for the case when $\lambda_{n}(\pi)$ is a SR
eigenvalue are the same.

$(b)$ If $\lambda_{n}(0)$ is not a SR eigenvalue then the following cases are possible.

Case 1. $\lambda_{n}(0)$ is a double eigenvalue. Then by Summary 1$(b)$ and
Summary 2$(c)$ $\lambda_{n}(0)=\lambda_{-n}(0)\in\mathbb{R}$. Thus by
Propositions 2 we have $A_{n}=\lambda_{n}(0)=\lambda_{-n}(0)=B_{-n}$ which
means that there is not a gap between $[A_{-n},B_{-n}]$ and $[A_{n},B_{n}]$.

Case 2. $\lambda_{n}(0)$ is a nonreal eigenvalue. Then using Proposition 2 and
(12) we again obtain that $B_{-n}=A_{n}$ which again means that there is not a
gap between $[A_{-n},B_{-n}]$ and $[A_{n},B_{n}]$. In the same way, instead of
(12) using (13), we prove that if $\lambda_{n}(\pi)$ is not a SR eigenvalue
then there is not a gap between $[A_{n},B_{n}]$ and $[A_{-n-1},B_{-n-1}]$.

$(c)$ If the total number of the SR periodic and antiperiodic eigenvalues is
finite then there exists $m$ such that $\lambda_{n}(0)$ ($\lambda_{n}(\pi)))$
for $\left\vert n\right\vert >m$ is not a SR eigenvalue and hence by $(b)$
there is no gap between $\operatorname{Re}\left(  \Gamma_{-n}\right)  $ and
$\operatorname{Re}\left(  \Gamma_{n}\right)  $ ($\operatorname{Re}\left(
\Gamma_{n}\right)  $ and $\operatorname{Re}\left(  \Gamma_{-n-1}\right)  )$.
Therefore, by Proposition 1, $q$ is finite-zone potential. Now suppose that
the total number of the SR periodic and antiperiodic eigenvalues is infinite.
Then by $(a)$ the number of gaps in the spectrum is also infinite
\end{proof}

To construct the finite-zone potentials we use the following consequence of
Theorem 3.

\begin{corollary}
Suppose that (2) holds. If there exists $m>0$ such that $\lambda_{n}(0)$ and
$\lambda_{n}(\pi)$ for $n>m$ are nonreal numbers then there exists $R$ such
that $[R,\infty)\subset\sigma(L(q))$ and the number of gaps in the real part
$\operatorname{Re}(\sigma(L(q)))$ of $\sigma(L(q))$ is finite.
\end{corollary}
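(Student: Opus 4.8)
The plan is to deduce this from Theorem 3$(c)$, which says that $q$ is finite-zone exactly when only finitely many periodic and antiperiodic eigenvalues are SR. So the first task is to show that the hypothesis forces all but finitely many $2$-periodic eigenvalues to be non-SR. Put $M=\max\{m,N\}$ and fix $n>M$. By hypothesis $\lambda_{n}(0)$ is nonreal, hence not SR; the point is to show that its partner $\lambda_{-n}(0)$ is also nonreal. By Summary 1$(b)$, inequality (5), there are exactly two periodic eigenvalues (counting multiplicity) in the $n$-neighborhood of $(2\pi n)^{2}$, namely $\lambda_{n}(0)$ and $\lambda_{-n}(0)$. Since $(2\pi n)^{2}$ is real, $\overline{\lambda_{n}(0)}$ lies in the same neighborhood, and the first implication in (3) makes it a periodic eigenvalue; as $\lambda_{n}(0)$ is nonreal, $\overline{\lambda_{n}(0)}\neq\lambda_{n}(0)$, so the count of exactly two forces $\lambda_{-n}(0)=\overline{\lambda_{n}(0)}$, which is nonreal. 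Thus both $\lambda_{n}(0)$ and $\lambda_{-n}(0)$ are non-SR for $n>M$.

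An identical argument applies at the antiperiodic end: by (5)--(6) the two eigenvalues in the $n$-neighborhood of $(2\pi n+\pi)^{2}$ are $\lambda_{n}(\pi)$ and $\lambda_{-n-1}(\pi)$, so $\lambda_{n}(\pi)$ nonreal together with (3) forces $\lambda_{-n-1}(\pi)=\overline{\lambda_{n}(\pi)}$ to be nonreal, and both are non-SR for $n>M$. Consequently every periodic eigenvalue $\lambda_{k}(0)$ with $|k|>M$ and every antiperiodic eigenvalue $\lambda_{k}(\pi)$ with $|k|>M$ is non-SR, leaving at most finitely many SR $2$-periodic eigenvalues. By Theorem 3$(c)$ the potential $q$ is finite-zone, i.e. $\operatorname{Re}(\sigma(L(q)))$ has only finitely many gaps.

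It remains to produce the half-line. By Theorem 2$(a)$, $\sigma(L(q))\cap[A_{-N},\infty)$ is the union of the intervals $[A_{k},B_{k}]$, $|k|\geq N$, arranged in the increasing chain (11), and by Theorem 2$(b)$--$(c)$ the only possible gaps are the intervals $(B_{-n},A_{n})$ and $(B_{n},A_{-n-1})$. Since only finitely many of these are nonempty, there is an index beyond which consecutive endpoints coincide ($B_{-n}=A_{n}$ and $B_{n}=A_{-n-1}$), so the corresponding intervals link into a single ray; taking $R$ to be its left endpoint and using $A_{n}\to\infty$ gives $[R,\infty)\subset\sigma(L(q))$.

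The main obstacle is the partner-eigenvalue step in the first paragraph: one must verify that the conjugate $\overline{\lambda_{n}(0)}$ stays inside the very same $n$-neighborhood and is genuinely distinct from $\lambda_{n}(0)$, so that the count of exactly two eigenvalues forces it to be $\lambda_{-n}(0)$. The separation estimates in (5)--(6), which keep the conjugate from drifting into a neighboring cluster, are precisely what make this rigorous.
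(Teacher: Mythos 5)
Your proposal is correct and takes essentially the paper's own (primary) route: the paper proves this corollary simply by the remark ``the proof follows from Theorem 3,'' and your argument supplies exactly the details that remark leaves implicit --- the conjugate-pairing step forcing $\lambda_{-n}(0)=\overline{\lambda_{n}(0)}$ and $\lambda_{-n-1}(\pi)=\overline{\lambda_{n}(\pi)}$ to be nonreal (which mirrors the three-eigenvalue counting argument inside the paper's proof of Theorem 3$(a)$), followed by the chaining of the intervals $[A_{k},B_{k}]$ via Theorem 2. Be aware that the paper also gives a second, independent proof of the half-line inclusion via the Hill discriminant (Theorem 4, using $F(\lambda)\in\mathbb{R}$, the asymptotics (15), and the intermediate value theorem), which is the simpler alternative you did not use.
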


The proof follows from Theorem 3. Nevertheless, below we give the second and
more simple proof by using the following: $F(\lambda)\in\mathbb{R}$ for all
$\lambda\in\mathbb{R}$ if (2) holds (see [11]), where $F(\lambda
):=\varphi^{\prime}(1,\lambda)+\theta(1,\lambda)$ is the Hill discriminant,
$\theta$ and $\varphi$ are the solutions of
\[
-y^{^{\prime\prime}}(x)+q(x)y(x)=\lambda y(x)
\]
satisfying the initial conditions $\theta(0,\lambda)=\varphi^{\prime
}(0,\lambda)=1,$ $\theta^{\prime}(0,\lambda)=\varphi(0,\lambda)=0.$ More
precisely,we prove the following, more general statement, which also yields
Corollary 1.

\begin{theorem}
Let $q$ be locally integrable periodic potential such that%
\begin{equation}
\exists M:F(\lambda)\in\mathbb{R},\text{ }\forall\lambda\in\lbrack
M,\infty)\text{.}%
\end{equation}
If there exists $m>0$ such that $\lambda_{n}(0)$ and $\lambda_{n}(\pi)$ for
$n>m$ are nonreal numbers then there exists $R$ such that $[R,\infty
)\subset\sigma(L(q)).$
\end{theorem}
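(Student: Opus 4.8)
The plan is to combine the Hill discriminant hypothesis (14) with the classical characterization of the spectrum via $|F(\lambda)|\leq 2$, together with the band structure and asymptotics already established in Summaries 1–3 and Theorems 1–2. The key observation is that $\lambda\in\sigma(L)$ if and only if $F(\lambda)\in[-2,2]$, so that the real part of the spectrum on $[M,\infty)$ is governed by the real-valued function $F$ restricted to the real axis. I would first fix attention on the half-line $[M,\infty)$ where $F$ is guaranteed real-valued, and show that the hypothesis forces $F(\lambda)$ to stay inside $[-2,2]$ for all sufficiently large real $\lambda$.

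\textbf{Step 1: Translate the hypothesis on $2$-periodic eigenvalues into a statement about $F$.} The periodic and antiperiodic eigenvalues are exactly the zeros (on $\mathbb{C}$) of $F(\lambda)-2$ and $F(\lambda)+2$ respectively. The assumption that $\lambda_{n}(0)$ and $\lambda_{n}(\pi)$ are nonreal for all $n>m$ means that, apart from finitely many, all solutions of $F(\lambda)=\pm 2$ are nonreal. Since $F$ is real on $[M,\infty)$, I would conclude that $F$ attains the values $\pm 2$ only finitely often on the real half-line $[M,\infty)$: any real solution of $F(\lambda)=2$ is a real periodic eigenvalue and any real solution of $F(\lambda)=-2$ is a real antiperiodic eigenvalue, and by hypothesis there are only finitely many such (those with index $\leq m$).

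\textbf{Step 2: Use continuity and the real-valuedness of $F$ to trap $F$ in $[-2,2]$.} Choose $R\geq M$ larger than all the finitely many real values of $\lambda$ at which $F(\lambda)=\pm 2$. Then on $[R,\infty)$ the continuous real function $F$ never equals $2$ or $-2$, so by the intermediate value theorem $F(\lambda)$ cannot leave the interval $[-2,2]$ (it cannot cross either boundary). To pin down which side it stays on, I would invoke the asymptotics from Summary 1: for large $n$ the eigenvalue $\lambda_n(t)$ lies near $(2\pi n+t)^2$ with $t$ ranging over $[0,\pi]$, so the spectral bands genuinely fill out arbitrarily large real values; equivalently, one knows $F$ oscillates within $[-2,2]$ for large real $\lambda$ in the classical picture, so $F([R,\infty))\subseteq[-2,2]$ rather than being pushed outside. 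Hence every $\lambda\in[R,\infty)$ satisfies $|F(\lambda)|\leq 2$ and so belongs to $\sigma(L)$, giving $[R,\infty)\subset\sigma(L(q))$.

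\textbf{The hard part} will be Step 2, specifically justifying that $F$ stays inside $[-2,2]$ rather than outside it on $[R,\infty)$. Ruling out that $F$ might escape above $2$ or below $-2$ requires more than the intermediate value theorem alone: one must use that the bands $\Gamma_n$ accumulate at $+\infty$ along the real axis. I would supply this by appealing to Summary 1(b), which locates real points $\lambda_{\pm n}(h),\lambda_{\pm n}(\pi-h)$ of the spectrum arbitrarily far out on the real line for $|n|\geq N$ (these are real by Summary 2(c)); since each such point satisfies $|F|\leq 2$ and they are unboundedly large, $F$ must take values in $[-2,2]$ at arbitrarily large real arguments, which combined with the no-crossing conclusion of Step 1 forces $F([R,\infty))\subseteq[-2,2]$. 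This completes the identification of $[R,\infty)$ as a subset of $\sigma(L(q))$.
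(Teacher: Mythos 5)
Your overall strategy---reduce the claim to showing $F(\lambda)\in[-2,2]$ for all sufficiently large real $\lambda$ via the characterization $\lambda\in\sigma(L)\Leftrightarrow F(\lambda)\in[-2,2]$, rule out real solutions of $F=\pm2$ beyond some $R$ using the hypothesis on the $2$-periodic eigenvalues, and then trap $F$ inside $[-2,2]$ by the intermediate value theorem---is exactly the paper's strategy, and you correctly identified where the real work lies: one must know that $F$ actually takes values inside $(-2,2)$ at arbitrarily large real arguments, so that the no-crossing argument pins $F$ to the inside rather than the outside. The gap is in how you supply that ingredient. You take it from Summary 1(b) combined with Summary 2(c) (reality of $\lambda_{\pm n}(h)$, $\lambda_{\pm n}(\pi-h)$ for $|n|\geq N$). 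But Summary 2 is stated, and in [13] proved, only for potentials satisfying (2), i.e.\ PT-symmetric ones; the reality of those Bloch eigenvalues comes from the symmetry $\lambda\in\sigma(L_{t})\Rightarrow\overline{\lambda}\in\sigma(L_{t})$ in (3), which is a consequence of PT-symmetry. Theorem 4, however, assumes only (14) and is introduced by the paper precisely as a statement \emph{more general} than Corollary 1: $q$ is an arbitrary locally integrable periodic potential, and for such $q$ the eigenvalues $\lambda_{\pm n}(h)$ need not be real. So your argument, as written, proves Corollary 1 (the PT-symmetric case) but not Theorem 4; the citation of Summary 2(c) is not available under the theorem's hypotheses.

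The paper's proof avoids this by using the discriminant asymptotics (15), $F(\lambda)=2\cos\sqrt{\lambda}+O(1/\sqrt{\lambda})$, which hold for \emph{every} locally integrable periodic potential: if $F(\lambda_{1})>2$ for some large $\lambda_{1}$, then (15) (for instance at points with $\sqrt{\lambda}=2\pi n+\pi/2$, where $2\cos\sqrt{\lambda}=0$) produces $\lambda_{2}>\lambda_{1}$ with $F(\lambda_{2})<2$, and continuity together with (14) then yield a real zero of $F-2$, i.e.\ a real periodic eigenvalue, contradicting the hypothesis. Replacing your appeal to Summary 2(c) by this use of (15) repairs your proof and makes it essentially identical to the paper's. (A more roundabout repair also exists: since $F$ is entire and real on $[M,\infty)$, Schwarz reflection gives $F(\overline{\lambda})=\overline{F(\lambda)}$ everywhere, hence $\sigma(L_{t})=\{\lambda:F(\lambda)=2\cos t\}$ is conjugation-invariant, and the reality of the well-separated eigenvalues $\lambda_{n}(t)$, $t\in[h,\pi-h]$, $|n|\geq N$, can then be recovered from (4) without PT-symmetry---but that argument is not in your proposal, and citing Summary 2(c) does not substitute for it.)
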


\begin{proof}
It is well-known that $\lambda_{n}(0)$ and $\lambda_{n}(\pi)$ are the zeros of
$F(\lambda)=2$ and $F(\lambda)=-2$ respectively, the asymptotic formula
\begin{equation}
F(\lambda)=2\cos\sqrt{\lambda}+O(1/\sqrt{\lambda})
\end{equation}
as $\lambda\rightarrow\infty$ holds and $\lambda\in\sigma(L(q))$ if and only
if $F(\lambda)\in\lbrack-2,2]$ (see [7]). Thus it is enough to show that there
exists a number $R$ such that $F(\lambda)\in\lbrack-2,2]$ for all $\lambda\geq
R.$ Suppose to the contrary that for any large positive number $R>M$ there
exists $\lambda_{1}\geq R$ such that $F(\lambda_{1})\notin\lbrack-2,2].$
Without loss of generality, suppose that $F(\lambda_{1})>2.$ On the other
hand, by (15), there exists $\lambda_{2}>\lambda_{1}$ such that $F(\lambda
_{2})<2.$ Since $F$ is a continuous (see [7]) and real-valued function on
$\left[  \lambda_{1},\lambda_{2}\right]  $ (see (14))) there exists
$\lambda\in\left[  \lambda_{1},\lambda_{2}\right]  $ such that $F(\lambda)=2,$
that is, $\lambda=\lambda_{n}(0)$ and hence $\lambda_{n}(0)\in\mathbb{R}$. It
contradicts the condition of the theorem
\end{proof}

\begin{remark}
There exist a lot of asymptotic formulas for $\lambda_{n}(0)$ and $\lambda
_{n}(\pi).$ Using those formulas one can find the conditions on $q$ such that
$\lambda_{n}(0)$ and $\lambda_{n}(\pi)$ are nonreal numbers which implies that
$q$ is a finite-zone potential. Suppose that we have the formulas
\[
\lambda_{n}(0)=a_{n}(1+o(1)),\text{ }\lambda_{n}(\pi)=b_{n}(1+o(1)).
\]
If there exists $m>0$ such that$\ \operatorname{Im}a_{n}\neq0$ and
$\operatorname{Im}b_{n}\neq0$ for all $n\geq m,$ then $q$ is a finite-zone
potential. Besides, it is well-known the asymptotic formulas as
\[
\lambda_{n}(0)-\lambda_{-n}(0)=c_{n}(1+o(1)),\text{ }\lambda_{n}(\pi
)-\lambda_{-n-1}(\pi)=d_{n}(1+o(1)).
\]
It follows from (3), (5) and (6) that if $\lambda_{n}(0)$ ($\lambda_{n}(\pi)$)
is a nonreal number, then $\lambda_{n}(0)-$ $\lambda_{-n}(0)=\pm
2\operatorname{Im}\lambda_{n}(0)$ ($\lambda_{n}(\pi)-\lambda_{-n-1}(\pi
)=\pm2\operatorname{Im}\lambda_{n}(\pi)$). Moreover, $\lambda_{n}(0)$
($\lambda_{n}(\pi)$) is a nonreal numbers if and only if $\lambda_{n}(0)-$
$\lambda_{-n}(0)$ ($\lambda_{n}(\pi)-\lambda_{-n-1}(\pi)$) is nonreal. Thus,
if there exists $m>0$ such that$\ \operatorname{Im}c_{n}\neq0$ and
$\operatorname{Im}d_{n}\neq0$ for all $n\geq m,$ then $q$ is a finite-zone
potential. Using these asymptotic formulas one can construct a large class of
the finite-zone potential. Below we construct some of them.
\end{remark}

Now we find the conditions on the \ Fourier coefficients%
\begin{equation}
q_{n}=\int_{0}^{1}q(x)e^{-i2\pi nx}dx,\text{ }f_{n}=\int_{0}^{1}f(x)\cos2\pi
nxdx,\text{ }g_{n}=\int_{0}^{1}g(x)\sin2\pi nxdx\text{ }%
\end{equation}
for which $q$ is finite-zone potential, where $f=\operatorname{Re}q$ and
$g=\operatorname{Im}q.$ It is clear that if $q$ is PT-symmetric, then $f$ and
$g$ are even and odd functions respectively and hence
\begin{equation}
q_{n}=f_{n}+g_{n},\text{ }q_{-n}=f_{n}-g_{n}.
\end{equation}
Let $Q_{n}\ $and$\,S_{n}$ be the Fourier coefficients%
\[
Q_{n}=\int_{0}^{1}Q(x)e^{-i2\pi nx}dx,\text{ }S_{n}=\int_{0}^{1}S(x)e^{-i2\pi
nx}dx,
\]
of $Q$ and $S$ defined by
\begin{equation}
Q(x)=\int_{0}^{x}q(t)\,dt,\quad S(x)=Q^{2}(x)
\end{equation}
and%
\begin{equation}
P_{n}=q_{n}q_{-n}-q_{n}\left(  S_{-n}-2Q_{0}Q_{-n}\right)  -q_{-n}%
(S_{n}-2Q_{0}Q_{n}).
\end{equation}
Using (34) and (35) of [13] one can formulate Theorem 8 of [13] as follows.

\begin{summary}
Suppose that $q$ satisfies (2) and $q\in W_{1}^{s}[0,1]$ for some $s\geq0$ and
there exists positive constants $m$ and $\alpha$ such that for $n>m$ the
inequality
\[
\left\vert P_{n}\right\vert >\alpha n^{-2s-2}%
\]
holds. Then $\lambda_{n}(0)$ $\ $and $\lambda_{n}(\pi)$ are the real numbers
if and only if $P_{2n}\geq0$ and $P_{2n+1}\geq0$ respectively.
\end{summary}

Therefore from Corollary 1 we obtain the following.

\begin{theorem}
Suppose that $q$ satisfies (2) and $q\in W_{1}^{s}[0,1]$ for some $s\geq0.$ If
there exist positive constants $m$ and $\alpha$ such that for $n>m$ the
inequality
\begin{equation}
P_{n}<-\alpha n^{-2s-2}%
\end{equation}
holds, then $q$ is a finite-zone potential.
\end{theorem}

Now we prove the other and more applicable theorem for the finite-zone potentials.

\begin{theorem}
Suppose that (2) holds and $q\in W_{1}^{s}[0,1]$ for some $s\geq0.$ If there
exist $\delta>1$, $\beta>0$\ and $m>0$ such that
\begin{equation}
\left\vert g_{n}\right\vert >\beta n^{-s-1},\text{ }\left\vert g_{n}%
\right\vert >\delta\left\vert f_{n}\right\vert
\end{equation}
\ for all $n>m,$ then $q$ is a finite-zone potential, where $f_{n}$ and
$g_{n}$ are the Fourier coefficients of $\operatorname{Re}q$ and
$\operatorname{Im}q$ defined in (16).
\end{theorem}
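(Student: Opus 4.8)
The plan is to reduce the statement to Theorem 4, whose hypothesis (20) reads $P_{n}<-\alpha n^{-2s-2}$ for all large $n$ and some $\alpha>0$. Once this inequality is verified, Theorem 4 gives at once that $q$ is finite-zone. The natural starting point is the identity $q_{n}q_{-n}=f_{n}^{2}-g_{n}^{2}$, which follows from (17). Under the hypothesis (21), for $n>m$ we have $g_{n}^{2}>\delta^{2}f_{n}^{2}$ with $\delta>1$, so this term is negative and, since $|g_{n}|>\beta n^{-s-1}$,
\[
q_{n}q_{-n}=f_{n}^{2}-g_{n}^{2}<-(1-\delta^{-2})g_{n}^{2}<-(1-\delta^{-2})\beta^{2}n^{-2s-2}.
\]
Moreover (21) forces $|f_{n}|<|g_{n}|/\delta$, whence $|q_{n}|,|q_{-n}|\leq(1+\delta^{-1})|g_{n}|$; thus both Fourier coefficients are controlled by $|g_{n}|$. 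So I expect the leading term of $P_{n}$ to be exactly $q_{n}q_{-n}$, of size comparable to $g_{n}^{2}$ and negative.

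It remains to show that the remaining terms in (19), namely $E_{n}:=q_{n}(S_{-n}-2Q_{0}Q_{-n})+q_{-n}(S_{n}-2Q_{0}Q_{n})$, are negligible compared with $q_{n}q_{-n}$. Write $U_{n}:=S_{n}-2Q_{0}Q_{n}$. Since $Q'=q$ and $\int_{0}^{1}q=0$, one has $Q_{n}=q_{n}/(2\pi i n)$ for $n\neq0$ and $Q$ is periodic, so $S=Q^{2}$ is periodic; a short computation from $S'=2Qq$ then gives $U_{n}=(\tilde{Q}q)_{n}/(\pi i n)$ with $\tilde{Q}=Q-Q_{0}$. The PT-symmetry (2) makes every $q_{k}$ real and every $Q_{k}$ purely imaginary, so all quantities here are real and $P_{n}\in\mathbb{R}$, as a sign argument requires. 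The crucial estimate is $U_{n}=o(n^{-s-1})$: because $\tilde{Q}\in W_{1}^{s+1}$ and $q\in W_{1}^{s}$, the product $\tilde{Q}q$ lies in $W_{1}^{s}$ (the rougher factor $q$ governs the smoothness), so $(\tilde{Q}q)^{(s)}\in L_{1}[0,1]$ and the Riemann--Lebesgue lemma yields $n^{s}(\tilde{Q}q)_{n}\to0$, i.e. $(\tilde{Q}q)_{n}=o(n^{-s})$ and hence $U_{n}=o(n^{-s-1})$.

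Combining these facts, $|E_{n}|\leq(|q_{n}|+|q_{-n}|)\max(|U_{n}|,|U_{-n}|)=O(|g_{n}|)\cdot o(n^{-s-1})=o(|g_{n}|\,n^{-s-1})$. Dividing by the leading term and using $|g_{n}|>\beta n^{-s-1}$,
\[
\frac{|E_{n}|}{|q_{n}q_{-n}|}=\frac{o(|g_{n}|\,n^{-s-1})}{(1-\delta^{-2})g_{n}^{2}}
=\frac{o(n^{-s-1})}{(1-\delta^{-2})|g_{n}|}\leq\frac{o(1)}{\beta(1-\delta^{-2})}\longrightarrow0 .
\]
Hence $P_{n}=q_{n}q_{-n}-E_{n}=q_{n}q_{-n}(1+o(1))$, and for $n$ large enough $P_{n}<-\tfrac{1}{2}(1-\delta^{-2})\beta^{2}n^{-2s-2}$, which is (20) with $\alpha=\tfrac{1}{2}(1-\delta^{-2})\beta^{2}$. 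Theorem 4 then completes the argument.

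The main obstacle is precisely the sharp bound $U_{n}=o(n^{-s-1})$. A crude convolution estimate only delivers $U_{n}=O(n^{-s-1})$, for which the ratio $|E_{n}|/|q_{n}q_{-n}|$ is merely $O(1/\beta)$ and need not be small when $\beta$ is small, so the conclusion would fail for small $\beta$. The extra $o(1)$ factor, obtained from the genuine smoothness of $\tilde{Q}q$ via Riemann--Lebesgue, is exactly what makes the theorem hold for every $\beta>0$. The one point to check carefully is that $\tilde{Q}q\in W_{1}^{s}$ also for non-integer $s$; for integer $s$ this follows from the Leibniz rule, since each factor $\tilde{Q}^{(j)}$ with $j\leq s$ is continuous while $q^{(s-j)}\in L_{1}$, and for fractional $s$ one invokes the corresponding product estimate in $W_{1}^{s}$.
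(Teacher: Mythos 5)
Your proposal is correct and follows essentially the same route as the paper's own proof: reduce to the criterion $P_{n}<-\alpha n^{-2s-2}$ (this is the paper's Theorem 5, not Theorem 4, though you identify it correctly via its hypothesis (20)), observe that under (21) the main term $q_{n}q_{-n}=f_{n}^{2}-g_{n}^{2}$ is negative with $|q_{n}q_{-n}|$ bounded below by a multiple of $n^{-2s-2}$, and show the remaining terms of (19) are relatively $o(1)$ using periodicity and smoothness of $Q$ and $S$ together with integration by parts and Riemann--Lebesgue, exactly as in the paper's (22)--(23). The only difference is organizational: you package $S_{n}-2Q_{0}Q_{n}$ as $(\tilde{Q}q)_{n}/(\pi i n)$ and estimate it in one stroke, whereas the paper estimates $S_{\pm n}$ and $Q_{\pm n}$ separately as $o(n^{-s-1})$; both deliver the same conclusion (and your worry about non-integer $s$ is moot, since the paper's $W_{1}^{s}$ with references to $q^{(s)}$ plainly intends integer $s$).
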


\begin{proof}
It readily follows from (17) and (21) that%
\[
\left\vert q_{n}\right\vert >\varepsilon n^{-s-1},\text{ \ }\left\vert
q_{-n}\right\vert >\varepsilon n^{-s-1}%
\]
and hence
\begin{equation}
-q_{n}q_{-n}=\left(  g_{n}^{2}-f_{n}^{2}\right)  >\gamma n^{-2s-2},\text{
}\frac{\left\vert q_{\pm n}\right\vert }{\left\vert q_{n}q_{-n}\right\vert
}\text{ }=O(n^{s+1})
\end{equation}
for some $\varepsilon>0$ and $\gamma>0.$ Using (2) and taking into account
that $q^{(s)}$ is integrable on $[0,1]$ one can readily verify that the
functions $Q$ and $S$ defined in (18) are $1$ periodic functions, $Q^{(s+1)}$
and $S^{(s+1)}$ are integrable on $[0,1]$. Therefore
\begin{equation}
\text{ }S_{\pm n}=o\left(  n^{-s-1}\right)  ,\text{ }Q_{\pm n}=o\left(
n^{-s-1}\right)  .
\end{equation}
Now using (22) and (23) in (19) we obtain%
\[
P_{n}=\left(  f_{n}^{2}-g_{n}^{2}\right)  (1+o(1)).
\]
Therefore if (21) holds then (20) holds too. Hence the proof follows from
Theorem 5
\end{proof}

Theorem 6 shows that one can construct the large and easily checkable classes
of the finite-zone PT-symmetric periodic potentials, by constructing the
potentials $q$ so that the Fourier coefficients of $g=\operatorname{Im}q$ is
greater (by absolute value) than the Fourier coefficient of
$f=\operatorname{Re}q.$ One of them are given in the next theorem.

\begin{theorem}
Suppose that (2) holds, $g\in W_{1}^{s+1}(a,a+1)\cap W_{1}^{s}[0,1]$ for some
$s\geq0$ and $a\in\lbrack0,1)$ and $g^{(s)}$ has a jump discontinuity at $a$
with size of the jump $c:=g^{(s)}(a+0)-g^{(s)}(a-0),$ while either $f\in
W_{1}^{s+1}[0,1]$ or $f\in W_{1}^{s+1}(b,b+1)\cap W_{1}^{s}[0,1]$ for some
$b\in\lbrack0,1)$ and $f^{(s)}$ has a jump discontinuity at $b$ with size of
the jump $d:=f^{(s)}(b+0)-f^{(s)}(b-0).$ If $\left\vert d\right\vert
<\left\vert c\right\vert $ then $q$ is a finite-zone potential.
\end{theorem}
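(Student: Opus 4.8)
The plan is to deduce the statement from Theorem 6. All that is needed is to translate the prescribed jump discontinuities of $f^{(s)}$ and $g^{(s)}$ into the decay of the Fourier coefficients $f_{n}$ and $g_{n}$ appearing in (21); once we know that $|g_{n}|$ decays exactly like $|c|\,n^{-s-1}$ while $|f_{n}|$ decays like $|d|\,n^{-s-1}$ (with $d=0$ in the smoother alternative for $f$), the hypothesis $|d|<|c|$ will hand us both inequalities of (21).

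First I would record the classical mechanism relating a jump to the decay rate of the Fourier coefficients. If $h$ is $1$-periodic, $h^{(s)}$ has a single jump $J=h^{(s)}(p+0)-h^{(s)}(p-0)$ at $p$, and $h^{(s+1)}$ is integrable over the rest of the period, then integrating $\widehat{h}_{n}=\int_{0}^{1}h(x)e^{-i2\pi nx}\,dx$ by parts $s+1$ times and applying the Riemann--Lebesgue lemma to the remaining integral gives
\[
\widehat{h}_{n}=\frac{J\,e^{-i2\pi np}}{(i2\pi n)^{s+1}}+o\!\left(n^{-s-1}\right).
\]
Next I would use the PT-symmetry: by (2), $f=\operatorname{Re}q$ is even and $g=\operatorname{Im}q$ is odd about the midpoint of the period, which forces the sine part of $\widehat{f}_{n}$ and the cosine part of $\widehat{g}_{n}$ to vanish, so that the coefficients defined in (16) satisfy $f_{n}=\widehat{f}_{n}$ and $g_{n}=i\,\widehat{g}_{n}$. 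Substituting the jump $c$ of $g^{(s)}$ and the jump $d$ of $f^{(s)}$ into the displayed asymptotics then yields
\[
|g_{n}|=\frac{|c|}{(2\pi n)^{s+1}}\bigl(1+o(1)\bigr),\qquad |f_{n}|=\frac{|d|}{(2\pi n)^{s+1}}\bigl(1+o(1)\bigr),
\]
where in the case $f\in W_{1}^{s+1}[0,1]$ one simply sets $d=0$ and reads this as $f_{n}=o(n^{-s-1})$.

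Granting these two asymptotics, the verification of the hypotheses of Theorem 6 is immediate: since $|d|<|c|$ forces $c\neq0$, the first asymptotic gives $|g_{n}|>\beta n^{-s-1}$ for any $\beta<|c|/(2\pi)^{s+1}$ and all large $n$, while the ratio $|g_{n}|/|f_{n}|\to|c|/|d|>1$ (and to $+\infty$ when $f$ is smooth) lets us fix $\delta$ with $1<\delta<|c|/|d|$ and obtain $|g_{n}|>\delta|f_{n}|$ for all large $n$. As $q=f+ig\in W_{1}^{s}[0,1]$, Theorem 6 then applies and $q$ is finite-zone. The routine ingredient is the repeated integration by parts; the step that requires genuine care is the symmetry bookkeeping in passing from $\widehat{f}_{n},\widehat{g}_{n}$ to $f_{n},g_{n}$. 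Because $f$ is even and $g$ is odd, a single jump is compatible with the symmetry only when it sits at a point fixed by the reflection of the period, and one must check that this is exactly what keeps the leading coefficients of $f_{n}$ and $g_{n}$ non-oscillatory and proportional to $d$ and $c$ with a common constant; it is precisely this point that makes the uniform lower bound $|g_{n}|>\beta n^{-s-1}$ hold and lets the strict inequality $|d|<|c|$ pass verbatim into (21).
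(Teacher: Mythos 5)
Your proof is correct and follows essentially the same route as the paper's: integrate by parts $s+1$ times to convert the jumps of $f^{(s)}$ and $g^{(s)}$ into the asymptotics $\left\vert g_{n}\right\vert =\left\vert c\right\vert (2\pi n)^{-s-1}+o(n^{-s-1})$ and $\left\vert f_{n}\right\vert =\left\vert d\right\vert (2\pi n)^{-s-1}+o(n^{-s-1})$ (or $f_{n}=o(n^{-s-1})$ in the smooth case), and then feed the resulting inequalities (21) into Theorem 6. Your detour through the exponential coefficients $\widehat{f}_{n},\widehat{g}_{n}$ and the parity of $f$ and $g$ is only a cosmetic variant of the paper's direct computation with the sine and cosine coefficients, and if anything it is slightly more careful, since the paper's stated formula for $\left\vert g_{n}\right\vert$ tacitly relies on the fact (which you flag) that oddness of $g$ forces the single jump to sit at a reflection-fixed point of the period.
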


\begin{proof}
The Fourier coefficient $g_{n}$ defined in (16) can be calculated by%
\[
g_{n}=\lim_{\varepsilon\rightarrow0}\int_{a+\varepsilon}^{a+1-\varepsilon
}g(x)\sin2\pi nxdx.
\]
Applying $s+1$ times the integration by parts formula we get
\[
\left\vert g_{n}\right\vert =\left\vert c\right\vert (2\pi n)^{-s-1}%
+o(n^{-s-1}).
\]
In the same way we obtain that either $f_{n}=o(n^{-s-1})$ or
\[
\left\vert f_{n}\right\vert =\left\vert d\right\vert (2\pi n)^{-s-1}%
+o(n^{-s-1}).
\]
Therefore if $\left\vert d\right\vert <\left\vert c\right\vert $, then (21)
holds and hence the proof follows from Theorem 6
\end{proof}


\begin{thebibliography}{99}                                                                                               %


\bibitem {}Z. Ahmed , Energy band structure due to a complex, periodic, PT
-invariant potential \textit{Phys. Lett. A} \textbf{286} (2001) 231--235.

\bibitem {}F. Bagarello, \textit{Non-Selfadjoint Operators in Quantum Physics:
Mathematical Aspects}, eds. F. Bagarello, J.-P. Gazeau, F. H. Szafraniec and
M. Znojil (John Wiley \& Sons, Inc., 2015).

\bibitem {}C. M. Bender, G. V. Dunne and P. N. Meisinger, Complex periodic
potentials with real band spectra, \textit{Phys. Lett.} A \textbf{252} (1999) 272--276.

\bibitem {}E. Caliceti and S. Graffi, Reality and non-reality of the spectrum
of PT -symmetric operators: Operator-theoretic criteria, \textit{Pramana J. of
Phys.} \textbf{73} (2009) 241-249.

\bibitem {}J. M. Cervero and A. Rodriguez , The band spectrum of periodic
potentials with PT -symmetry\textit{ J. Phys. A}: \textit{Math. Gen.}
\textbf{37} (2004) 10167-10177.

\bibitem {}K. G. Makris, R. El-Ganainy, D. N. Christodoulides and Z. H.
Musslimani, PT-Symmetric Periodic Optical Potentials, \textit{Int. J. Theor.
Phys}. \textbf{50} (2011) 1019--1041.

\bibitem {}V. A. Marchenko, \textit{Sturm-Liouville Operators and
Applications}\ (Birkhauser Verlag,

Basel, 1986).

\bibitem {}D. C. McGarvey, Differential operators with periodic coefficients
in $L_{p}(-\infty,\infty)$, \textit{J. Math. Anal. Appl.} \textbf{11} (1965) 564-596.

\bibitem {}A. Mostafazadeh, Psevdo-hermitian representation of quantum
mechanics, \textit{Int. J. of Geom. Methods Mod. Phys.} \textbf{11} (2010) 1191-1306.

\bibitem {}F. S. Rofe-Beketov, The spectrum of nonselfadjoint differential
operators with periodic coefficients, \textit{Soviet Math. Dokl.} \textbf{4}
(1963) 1563-1566.

\bibitem {}K. C. Shin, On the shape of spectra for non-self-adjoint periodic
Schr\"{o}dinger operators, \textit{J. Phys. A: Math. Gen.} \textbf{37} (2004) 8287-8291.

\bibitem {}O. A. Veliev, On the spectral singularities and spectrality of the
Hill's Operator, \textit{Operat. Matrices} \textbf{10} (2016) 57-71.

\bibitem {}O. A. Veliev, On the spectral properties of the Schrodinger
operator with a periodic PT-symmetric potential, \textit{Int. J. of Geom.
Methods Mod. Phys.}\textbf{14} (2017) 1750065.
\end{thebibliography}
\end{document}